\newtheorem{theorem}{Theorem}[section]
\newtheorem{lemma}[theorem]{Lemma}
\newtheorem{proposition}[theorem]{Proposition}
\theoremstyle{definition}
\theoremstyle{remark}
\newtheorem{remark}[theorem]{Remark}
\newcommand{\vn}[1]{{\vert\kern-0.23ex\vert\kern-0.23ex\vert #1 
    \vert\kern-0.23ex\vert\kern-0.23ex\vert}}
\newcommand{\stoch}[1]{[ #1\vert\kern-0.5ex]}
\newcommand\Z{\mathbb{Z}}
\newcommand\cA{\mathcal{A}}
\newcommand\cS{\mathcal{S}}
\newcommand\cG{\mathcal{G}}
\newcommand{\cT}{\mathcal{T}}
\newcommand\cP{\mathcal{P}}
\newcommand\cF{\mathcal{F}}
\newcommand{\F}{\mathcal{F}}
\newcommand{\A}{\mathcal{A}}
\newcommand{\G}{\mathcal{G}}
\newcommand{\bA}{\mathbb{A}}
\newcommand{\bB}{\mathbb{B}}
\newcommand\scC{\mathscr{C}}
\def\eps{\varepsilon}
\newcommand{\R}{{\mathbb{R}}}
\newcommand{\E}{\mathbf{E}}
\newcommand{\N}{\mathbb{N}}
\newcommand{\floorH}{\lfloor H \rfloor}
\def\d{\partial}
\title{Regularisation by regular noise}
\author{M\'at\'e Gerencs\'er
}
\institute{TU Wien}
\begin{document}
\maketitle
\begin{abstract}
We show that perturbing ill-posed differential equations with (potentially very) smooth random processes can restore well-posedness -- even if the perturbation is (potentially much) more regular than the drift component of the solution.
The noise considered is of fractional Brownian type, and the familiar regularity condition $\alpha>1-1/(2H)$ is recovered for all non-integer $H>1$.
\end{abstract}

\section{Introduction}
Consider the stochastic differential equation
\begin{equ}\label{eq:main}
X_t=\int_0^tb(X_r)\,dr+B^H_t.
\end{equ}
For Hurst parameter $H\in(0,1)$, fractional Brownian motions $B^H$ can be defined via the Mandelbrot - van Ness representation \cite{MvN}
\begin{equs}\label{eq:Mandelbrot}
B^H_t = \int_{-\infty}^0 \bigl(|t-s|^{H-1/2}- |s|^{H-1/2}\bigr) \, dW_s + \int_0^t |t-s|^{H-1/2} \, dW_s,
\end{equs}
where $W$ is a two-sided $d$-dimensional standard Brownian motion on some probability space $(\Omega,\cF,\mathbb{P})$.
The complete filtration generated by the increments of $W$ is denoted by $\mathbb{F}=(\F_t)_{t\in\R}$, with respect to which the notion of adaptedness is understood in the sequel, unless otherwise specified.
Since \eqref{eq:Mandelbrot} is a fractional integration, it is natural to extend this scale of random processes to parameters $H\in(1,\infty)\setminus\Z$ by
\begin{equ}\label{eq:BH def}
B^H_t=\int_{0\leq r_1\leq\cdots \leq r_{\floorH}\leq t}B^{H-\floorH}_{r_1}\,dr_1\cdots\,dr_{\floorH}.
\end{equ}
It is a well-studied phenomenon that adding a noise term to an ill-posed differential equation can provide regularisation effects. That is, while for $\alpha<1$ and $b\in C^\alpha$
the equation $X'_t=b(X_t)$ is in general not well-posed, upon perturbing it with some random process (Brownian, fractional Brownian, or L\'evy-tpye), due to the oscillations of the noise one obtains a well-posed equation.
It might seem counterintuitive to expect similar effects from the processes $B^H$ for large $H$. Indeed, in the regime\footnote{Under the condition \eqref{eq:true exponent} of the main theorem, this phenomenon may already happen for $H>1+1/\sqrt{2}$.} $H>2$ this would mean that the noise component of \eqref{eq:main} would be more regular (or, less oscillatory) than the drift component.
Therefore it may be surprising that
nontrivial regularisation effects happen for all $H\in(1,\infty)\setminus\Z$.

Before stating the main result, let us clarify the solution concept: given a stopping time $\tau$, a \emph{strong solution} of \eqref{eq:main} up to time $\tau$ is an adapted process $X$ such that the equality \eqref{eq:main} holds almost surely for all $t\in [0,\tau]$. Two solutions are understood to be equivalent if they are equal almost surely for all $t\in [0,\tau]$.
We then have the following.
\begin{theorem}\label{thm:main}
Let $H\in[1/2,\infty)\setminus\Z$ and $b\in C^\alpha$, where
\begin{equ}\label{eq:true exponent}
\alpha>1-\frac{1}{2H}.
\end{equ}
Then \eqref{eq:main} has a unique strong solution up to time $1$.
\end{theorem}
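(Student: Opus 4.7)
I would follow the stochastic-sewing / nonlinear-Young approach pioneered by Catellier--Gubinelli and systematised by L\^e, Galeati, and others in the $H<1$ case. Setting $\theta_t:=X_t-B^H_t$, \eqref{eq:main} becomes $\theta_t=\int_0^t b(\theta_r+B^H_r)\,dr$, and the central object is the averaged field
\[
T^{B^H}_{s,t}b(x):=\int_s^t b(x+B^H_r)\,dr.
\]
The whole theorem reduces to proving, in a suitable $L^m(\Omega;C^{1+\gamma}_x)$-norm, an estimate $\|T^{B^H}_{s,t}b\|\les (t-s)^{\kappa}\|b\|_{C^\alpha}$ with $\gamma>0$ and $\kappa>1/2$; fed into the standard nonlinear-Young fixed-point argument and the Davie-type uniqueness argument, this delivers both existence and path-by-path uniqueness.

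\textbf{Local non-determinism for $B^H$.} The essential probabilistic input is a local non-determinism statement: conditionally on $\F_s$, the random variable $B^H_t-\E[B^H_t\mid\F_s]$ is Gaussian with variance bounded below by a positive constant times $(t-s)^{2H}$. For $H\in(0,1)$ this is classical. For $H\in(1,\infty)\setminus\Z$ it has to be extracted from the iterated-integral definition \eqref{eq:BH def}: decomposing $B^{H-\floorH}_{r_1}=\E[B^{H-\floorH}_{r_1}\mid\F_s]+R^s_{r_1}$ for $r_1\in[s,t]$, the first piece is $\F_s$-measurable and contributes only to the conditional mean, while the second piece inherits a $(r_1-s)^{H-\floorH}$ local non-determinism from $B^{H-\floorH}$; the subsequent $\floorH$ integrations in \eqref{eq:BH def} over a simplex of size $(t-s)$ add a factor $(t-s)^{\floorH}$, yielding the desired $(t-s)^{2H}$ scaling in the conditional variance.

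\textbf{From non-determinism to smoothing, via stochastic sewing.} The Gaussian lower bound gives, by an integration-by-parts / Cameron--Martin argument, the single-time heat-kernel bound $\|\E_s[f(\,\cdot\,+B^H_t-B^H_s)]\|_{C^{\beta}_x}\les (t-s)^{-H(\beta-\alpha)}\|f\|_{C^\alpha}$, valid for $\beta\ge\alpha$ and for $\alpha$ possibly negative. Integrating this bound in time and applying it to $f=b$ and $f=\nabla b$ shows that the germ $A_{s,t}(x):=\E_s\!\int_s^t b(x+B^H_r)\,dr$ has $C^{1+\gamma}_x$-norm of order $(t-s)^{1-H(1+\gamma-\alpha)}$, while the martingale increment $A_{s,t}-\E_sA_{s,t}$ admits, by a standard two-scale argument on triples $(s,u,t)$, an $L^2(\Omega)$-bound with the same $(t-s)$-exponent. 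The L\^e stochastic sewing hypothesis then reduces to $1-H(1+\gamma-\alpha)>1/2$ for some $\gamma>0$, which is precisely \eqref{eq:true exponent}; the conclusion is the required smoothing of $T^{B^H}b$.

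\textbf{Main obstacle.} Once the smoothing estimate is in place, passage to the SDE follows standard lines: existence by approximating $b$ with mollifications $b^n$ and using compactness together with the uniform-in-$n$ bound, uniqueness by a Gronwall argument on the difference of two solutions that exploits the $C^1$ regularity of $T^{B^H}b$. The real obstacle is the probabilistic input of the second paragraph: naively integrating \eqref{eq:BH def} only reveals the deterministic polynomial-in-$t$ part of the conditional mean of $B^H_t$ and hides the noise. The delicate step is to organise the decomposition carefully enough to expose a genuinely Gaussian conditional remainder with the correct $(t-s)^{2H}$ scaling; once this is secured, the remainder of the argument is essentially a replay of the $H\in(0,1)$ theory.
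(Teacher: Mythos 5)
You have misplaced the main obstacle. The local nondeterminism of $B^H$ for $H>1$ is not the delicate point: conditioning kills the part of the simplex in \eqref{eq:BH def} with $r_1\leq s$, and a stochastic Fubini plus It\^o's isometry give the \emph{exact} identity \eqref{eq:BH var} in a few lines (your heuristic ``one factor $(t-s)^{H-\floorH}$ from the inner process, $(t-s)^{\floorH}$ from the integrations'' is correct, and no subtle lower-bound argument is needed since the variance can be computed explicitly). Likewise, your smoothing estimate for the averaged field $T^{B^H}_{s,t}b(x)$ at a \emph{fixed} spatial point $x$ does go through. The step that genuinely fails for $H>1$ is the one you dismiss as ``standard lines'': the passage from the averaged field to the SDE, i.e.\ the composition with the random argument $\theta_r$. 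Every version of that step (the Davie/Gronwall argument you invoke, or stochastic sewing) must estimate increments of the drift $\theta$ sitting inside spatial derivatives of the averaged field. For $H<1$ the Lipschitz bound $|\theta_u-\theta_s|\les u-s$ suffices, because there the drift is \emph{smoother} than the noise. For $H>1$ it does not: in the sewing estimate this increment is multiplied by the second-order heat-kernel factor $(r-s)^{H(\alpha-2)}$ of \eqref{eq:HK}, and with only the Lipschitz bound the resulting exponent $2+H(\alpha-2)$ exceeds $1$ only when $\alpha>2-1/H$, far from \eqref{eq:true exponent}; in your pathwise formulation, the uniqueness step needs $\kappa+\gamma>1$ with $\kappa=1-H(1+\gamma-\alpha)$, i.e.\ $\gamma(1-H)>H(1-\alpha)$, which admits no $\gamma>0$ once $H>1$.

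What is missing is precisely the paper's key new ingredient, Lemma \ref{lem:regularity}: for sufficiently high Picard iterates $\varphi\in\cS^{k_0}_K$ one can build, by iterating Taylor expansions of the noise inside the Picard map, an $\F_s$-measurable approximation of $\varphi_t$ with error $\les|t-s|^{1+\alpha(H-\eps)}$. Conditionally, the drift is thus far better than Lipschitz, and in particular beats the noise scale $(t-s)^H$ exactly when $\alpha>1-1/H$; feeding this bound, rather than the Lipschitz one, into the sewing estimate \eqref{eq:I1} is what makes the exponents close under \eqref{eq:true exponent}. Two further consequences of $H>1$ that your plan does not address: (a) the kernel $(r-s)^{H(\alpha-2)}$ is non-integrable for every $H>1$, $\alpha<1$, so the standard stochastic sewing lemma cannot be applied as is; the paper needs the shifted variant, Lemma \ref{lem:shifted SSL}, conditioning at $s-M(t-s)$ so that all kernels are evaluated at times comparable to $t-s$; (b) your promised path-by-path uniqueness is not attainable by this route -- for $H>1$ an adapted Lipschitz perturbation is \emph{rougher} than $B^H$ and can a priori destroy its irregularity, which is why the paper proves only strong uniqueness, via a contraction for $\cT_K$ restricted to the class $\cS^{k_0}_K$ on which Lemma \ref{lem:regularity} is available, and why the fixed-point space itself must consist of high-order Picard iterates rather than arbitrary Lipschitz processes.
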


The well-posedness of \eqref{eq:main} for $H\in(0,1)$ under the condition \eqref{eq:true exponent} is well-known in the literature.
This condition first appeared in \cite{NO1} in the scalar, $H\in(1/2,1)$ case and later in \cite{Cat-Gub} the the multidimensional, $H\in(0,1)$ case\footnote{Unlike the classical notion of uniqueness considered here, \cite{Cat-Gub} proves uniqueness in the so-called path-by-path sense, which is a stronger notion and does not easily follow from stochastic sewing.
}.
In the multidimensional, $H\in(1/2,1)$ case a simpler proof
was given in \cite{BDG},
relying on various sewing lemmas.
The present approach is simpler still (for example Girsanov transformation is fully avoided\footnote{This is not mere convenience but a necessity. One simply cannot expect the Girsanov transformation to work: the law of the solution is singular with respect to the law of $B^H$ as soon as $H>1+\alpha$.}), and uses only the stochastic sewing lemma introduced in \cite{Khoa}.
In the regime $H\in(0,1/2)$, the condition \eqref{eq:true exponent} allows for negative $\alpha$, that is, distributional drift, which makes the interpretation of the integral in \eqref{eq:main} nontrivial.
While this is also possible with stochastic sewing
(see e.g. \cite[Cor~4.5]{BDG} for a step towards this),
since the novelty of Theorem \ref{thm:main} is in the other direction ($H>1$), we do not pursue this here.

Instead of thinking of \eqref{eq:main} as an equation being driven by ``regular'' noise, one can equivalently view it as a ``degenerate'' SDE
\begin{equs}[eq:system]
dU^1_t&=dB^{H-\lfloor H\rfloor}_t\,;\\
dU^2_t&=U^1\,dt\,;\\
\,&\,\vdots\\
dX_t&=\big(b(X_t)+U^{\lfloor H\rfloor}_t\big)\,dt\,.
\end{equs}
For $H=k+1/2$, $k\in\N$, the driving noise in \eqref{eq:system} is standard Brownian, and therefore techniques based on the Zvonkin-Veretennikov transformation \cite{Zvonkin, Veretennikov} apply. This approach was explored recently in \cite{Brownian-1,Brownian-2}, where the authors prove well-posedness under the condition \eqref{eq:true exponent} with an elaborate PDE-based proof.

Let us conclude the introduction by spelling out some trivial equivalences. First, \eqref{eq:true exponent} can be rewritten as
\begin{equ}\label{eq:true exponent2}
1+H\alpha-H>1/2.
\end{equ}
Second, a solution of \eqref{eq:main} can be written as $X=\varphi+B^H$, where $\varphi$ is a fixed point of the map
\begin{equ}\label{eq:main-phi}
\varphi\mapsto \cT(\varphi),\qquad \big(\cT(\varphi)\big)_t=\int_0^t b(\varphi_r+B^H_r)\,dr.
\end{equ}
In Section \ref{sec:proof}, Theorem \ref{thm:main} is proved by showing that (a stopped version of) $\cT$ is a contraction on some space.
The main tools for the proof are set up in Section \ref{sec:pre}, more precisely in Lemmas \ref{lem:shifted SSL} and \ref{lem:regularity}.

\begin{remark}
A heuristic power counting argument for \eqref{eq:true exponent} can be given as follows.
For strong uniqueness one would like the map $\varphi\mapsto \cT(\varphi)$ to be Lipschitz with a small constant.
A more modest goal would be try to show that $t,x\mapsto\big(\cT(x)\big)_t$ is Lipschitz in $x\in\R^d$.
For starters, it is trivial that $t,x\mapsto\big(\cT(x)\big)_t$ is Lipschitz in $t$, $C^\alpha$ in $x$.
Thanks to the presence of $B^H$ one can use stochastic sewing to trade time and space regularity at the exchange rate $t^H\sim x$.
However, we can only trade as long as we remain with more than $1/2$ regularity in $t$ (this is the ``$1/2$ condition of the stochastic sewing lemma''), which means that the available gain in $x$ regularity is $1/(2H)$.
\end{remark}
\section{Preparations}\label{sec:pre}

\subsection{Notation and basics}\label{sec:notations}
The conditional expectation given $\F_s$ is denoted by $\E^s$. We often use the conditional Jensen's inequality (CJI) $\|\E^s X\|_{L_p(\Omega)}\leq\|X\|_{L_p(\Omega)}$ for $p\in[1,\infty]$ and the following elementary inequality for any $p\in[1,\infty]$, $X\in L_p(\Omega)$, and $\F_s$-measurable $Y$:
\begin{equ}\label{eq:conditional}
\|X-\E^s X\|_{L_p(\Omega)}\leq 2\|X-Y\|_{L_p(\Omega)}.
\end{equ}

For $0\leq s<t\leq 1$ and $\R^d$-valued functions $f$ on $[s,t]$ we introduce the (semi-)norms
\begin{equs}
\|f\|_{C^0[s,t]}&=\sup_{u\in[s,t]}|f_u|\,;& &\\
\,[f]_{C^\gamma[s,t]}&=\sup_{\substack{u,v\in[s,t]\\ u<v}}\frac{|\d^{\hat \gamma}f_u-\d^{\hat\gamma}f_v|}{|u-v|^{\bar\gamma}}\,,&\qquad&\gamma>0,\,\gamma=\hat\gamma+\bar\gamma,\,\hat\gamma\in\N,\bar\gamma\in(0,1];\\
\|f\|_{C^\gamma[s,t]}&=\|f\|_{C^0[s,t]}+[f]_{C^\gamma[s,t]},&\qquad &\gamma>0.
\end{equs}
When the domain is $\R^d$ instead of $[s,t]$, we simply write $C^\gamma$.
When considering functions with values in $L_p(\Omega)$ instead of $\R^d$, $p\in[1,\infty]$, we denote the corresponding spaces and (semi-)norms by $\scC^\gamma_p$.
If $f$ is an adapted process, $\gamma>0$, $0\leq s\leq t\leq 1$, one may choose $Y$ in \eqref{eq:conditional} to be the value at $t$ of the Taylor expansion of $f$ at $s$ up to order $\lfloor\gamma\rfloor$, yielding the bound
\begin{equ}\label{eq:triv bound}
\|f_t-\E^s f_t\|_{L_p(\Omega)}\leq 2 |t-s|^\gamma
[f]_{\scC^\gamma_p[s,t]}.
\end{equ}
Combining with CJI (or with triangle inequality) one also gets, for $0\leq s< u< t\leq 1$,
\begin{equ}\label{eq:triv bound2}
\|\E^u f_t-\E^s f_t\|_{L_p(\Omega)}\leq 2 |t-s|^\gamma
[f]_{\scC^\gamma_p[s,t]}.
\end{equ}
We will also use the following simple property.
Let $f$ be some random process and $\hat f$ a stopped version of it, that is, for some stopping time $\tau$, $\hat f_t=f_{t\wedge\tau}$. Then, for any $\gamma\in(0,1)$, $\eps\in(0,1-\gamma)$, and $p\in(d/\eps, \infty)$, there exists a constant $N$ depending on $\eps$, $p$, and $d$, such that for all $0<t\leq 1$ one has
\begin{equ}\label{eq:stopping}
\,[\hat f]_{\scC^\gamma_p[0,t]}\leq [\hat f]_{L_p(\Omega, C^\gamma[0,t])}\leq
[f]_{L_p(\Omega,C^\gamma[0,t])} \leq N [f]_{\scC^{\gamma+\eps}_p[0,t]}. 
\end{equ}
Indeed, the first two inequalities are trivial, and the third follows from Kolmogorov's continuity theorem.

The ``local intedeterminancy'' property of the processes $B^H$ for $H\in(0,1)$ is a key feature for their regularisation properties, and it also holds in the extended scale.
\begin{proposition}
For any $H\in(0,\infty)\setminus \Z$ there exists a constant $c(H)$ such that
for all $0\leq s\leq t\leq 1$ one has
\begin{equ}\label{eq:BH var}
\E|B_t^H-\E^sB_t^H|^2=dc(H)|t-s|^{2H}.
\end{equ}
\end{proposition}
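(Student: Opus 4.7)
My plan is to reduce $B^H_t - \E^s B^H_t$ to a single It\^o integral against $W$ supported on $[s,t]$ and apply It\^o's isometry. For the base case $H\in(0,1)$, the representation \eqref{eq:Mandelbrot} decomposes $B^H_t$ as an $\F_s$-measurable part (the integrals over $(-\infty,0]$ and $[0,s]$, whose integrands are deterministic there) plus the remainder $\int_s^t (t-u)^{H-1/2}\,dW_u$, which is independent of $\F_s$ and has mean zero. Hence $B^H_t - \E^s B^H_t$ equals this remainder, and It\^o's isometry gives
\[
\E|B^H_t - \E^s B^H_t|^2 = d\int_s^t (t-u)^{2H-1}\,du = \frac{d}{2H}(t-s)^{2H}.
\]

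For $H>1$ with $H\notin\Z$, write $n=\lfloor H\rfloor$ and $h=H-n\in(0,1)$. Cauchy's formula for iterated integration recasts \eqref{eq:BH def} as
\[
B^H_t = \frac{1}{(n-1)!}\int_0^t (t-r)^{n-1} B^h_r\,dr.
\]
The portion of the right-hand side over $[0,s]$ is $\F_s$-measurable since $B^h_r$ is $\F_s$-measurable for $r\le s$. Substituting the identity $B^h_r - \E^s B^h_r = \int_s^r (r-u)^{h-1/2}\,dW_u$ from the base case and exchanging the order of integration via the stochastic Fubini theorem yields
\[
B^H_t - \E^s B^H_t = \frac{1}{(n-1)!}\int_s^t \Bigl(\int_u^t (t-r)^{n-1}(r-u)^{h-1/2}\,dr\Bigr) dW_u.
\]
The substitution $r = u + (t-u)v$ shows the inner integral equals $B(n,h+\tfrac12)(t-u)^{H-1/2}$, with $B$ the Beta function, and a final application of It\^o's isometry gives the claim with $c(H) = B(n,h+\tfrac12)^2/\bigl((n-1)!\bigr)^2 \cdot (2H)^{-1}$.

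The only step requiring care is the stochastic Fubini interchange when $h\in(0,1/2)$, where $(r-u)^{h-1/2}$ is singular at $u=r$; however the joint kernel $(t-r)^{n-1}(r-u)^{h-1/2}\mathbf{1}_{u<r<t}$ still lies in $L^2([s,t]^2,du\,dr)$, since $\int_s^r (r-u)^{2h-1}\,du = (r-s)^{2h}/(2h)$ is finite, so standard versions of stochastic Fubini apply directly and no additional regularisation is needed.
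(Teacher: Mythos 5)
Your proof is correct and follows essentially the same route as the paper's: both reduce $B^H_t-\E^s B^H_t$ to a single Wiener integral over $[s,t]$ with kernel proportional to $(t-u)^{H-1/2}$ via a stochastic Fubini interchange, and then conclude by It\^o's isometry. The differences are only cosmetic --- you collapse the iterated time integrals through Cauchy's formula and evaluate a Beta integral where the paper integrates over the ordered simplex directly (the constants agree, since $B(n,h+\tfrac12)/(n-1)!=\prod_{i=1}^{n}(H-i+\tfrac12)^{-1}$), and you spell out the $H\in(0,1)$ base case and the Fubini justification, which the paper treats as known or implicit.
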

\begin{proof}
Since the coordinates of $B^H$ are independent, we may and will assume $d=1$.
For $H\in(0,1)$ this is well known, so we assume $\floorH\geq 1$.
From \eqref{eq:BH def} and \eqref{eq:Mandelbrot} we have
\begin{equs}
B_t^H-\E^sB_t^H&=\int_{s\leq r_0\leq r_1\leq\cdots\leq r_{\floorH}\leq t}|r_1-r_0|^{H-\floorH-1/2}\,dW_{r_0}\,dr_1\cdots\,dr_{\floorH}
\\
&=\int_{s\leq r_0\leq r_1\leq\cdots\leq r_{\floorH}\leq t}|r_1-r_0|^{H-\floorH-1/2}\,dr_1\cdots\,dr_{\floorH}\,dW_{r_0}.
\end{equs}
Therefore, by It\^o's isometry,
\begin{equs}
\E|B_t^H-\E^sB_t^H|^2&=\int_s^t\Big(\int_{r_0\leq r_1\leq\cdots\leq r_{\floorH}\leq t}|r_1-r_0|^{H-\floorH-1/2}\,dr_1\cdots\,dr_{\floorH}\Big)^2\,dr_0
\\&=
\int_s^t\Big(\frac{1}{\Pi_{i=1}^{\floorH}(H-i+1/2)}|t-r_0|^{H-1/2}\Big)^2\,dr_0
\\&=
\frac{1}{2H}\Big(\frac{1}{\Pi_{i=1}^{\floorH}(H-i+1/2)}\Big)^2|t-s|^{2H},
\end{equs}
as claimed.
\end{proof}
For any $\eps>0$, $\|B^H\|_{C^{H-\eps}[0,1]}$ is finite almost surely. Indeed, this is well-known for $H\in(0,1)$ and immediately follows for $H>1$ from the definition \eqref{eq:BH def}.
Fix $\eps>0$ such that
\begin{equ}\label{eq:eps}
2(1+H\alpha-H)-\eps\alpha>1,
\end{equ}
which is possible thanks to \eqref{eq:true exponent2}.
For any $K>0$, define the stopping times 
\begin{equ}
\tau_K=\inf\{t:\,\|B^H\|_{C^{H-\eps}[0,t]}>K\}\wedge1.
\end{equ}
From now on we consider the parameter $K$ to be fixed and prove the well-posedness of \eqref{eq:main} up to $\tau_K$.
Correspondingly, we take the following modification of the map $\cT$ from \eqref{eq:main-phi}:
\begin{equ}\label{eq:main-phi-K}
\varphi\mapsto \cT_K(\varphi),\qquad \big(\cT_K(\varphi)\big)_t=\int_0^{t} b(\varphi_{r\wedge\tau_K}+B^H_r)\,dr.
\end{equ}
Denote by $\cS^{0}_K$ the set of Lipschitz continuous and adapted processes and for $k\in\N$ introduce the set of Picard iterates by $\cS^k_K=\cT^{k}_K(\cS^{0}_K)$.

We denote by $\cP_t$ the convolution with Gaussian density whose covariance matrix is $t$ times the identity. In light of \eqref{eq:BH var}, it is natural to use the reparametrisation $\cP^H_t:=\cP_{c(H)t^{2H}}$.
One then has the identity $\E^s f(B^H_t)=\cP^H_{t-s}f(\E^s B^H_t)$.
Let us recall two well-known heat kernel estimates: for $\alpha\in[0,1]$, $\|f\|_{C^\alpha}\leq 1$, $t\in(0,1]$, one has the bounds, with some constant $N$ depending only on $H,\alpha,d$,
\begin{equs}[eq:HK]
|\cP^H_t f(x)-\cP^H_t f(y)|&\leq N t^{H(\alpha-1)}|x-y|\,;
\\
|\cP^H_t f(x_1)-\cP^H_tf(x_2)-\cP^H_tf(x_3)+\cP^H_tf(x_4)|&\leq N\big( t^{H(\alpha-2)}|x_1-x_2||x_1-x_3|
\\&\qquad+t^{H(\alpha-1)}|x_1-x_2-x_3+x_4|\big).
\end{equs}

In the sequel we use $A\lesssim B$ to denote that there exists a constant $N$ such that $A\leq NB$, with $N$ depending only on (a subset of) the following parameters: $H,\alpha, \eps, \|b\|_{C^\alpha}, d, p, K$.

\subsection{Stochastic sewing lemma}
A main tool in the proof is the stochastic sewing lemma introduced in \cite{Khoa}, which however needs to be suitably tweaked to avoid some singular integrals in the application.
Let us first briefly recall the strategy of \cite{Khoa}. Given a filtration $\mathbb{G}=(\G_i)_{i\geq 0}$ and a $\mathbb{G}$- adapted sequence of random variables, $(Z_i)_{i\geq 1}$, one can write the estimate for $p\in [2,\infty)$:
\begin{equs}[eq:SSL trick]
\big\|\sum_{i=1}^n Z_i\big\|_{L_p(\Omega)}
&\leq \big\|\sum_{i=1}^n \E^{\G_{i-1}}Z_i\big\|_{L_p(\Omega)}
+\big\|\sum_{i=1}^n Z_i-\E^{\G_{i-1}}Z_i\big\|_{L_p(\Omega)}
\\
&\lesssim\sum_{i=1}^n \|\E^{\G_{i-1}}Z_i\|_{L_p(\Omega)}
+\Big(\sum_{i=1}^n \|Z_i-\E^{\G_{i-1}}Z_i\|_{L_p(\Omega)}^2\Big)^{1/2},
\end{equs}
where the second sum was estimated via the Burkholder-Davis-Gundy and the Minkowski inequalities. The former can be applied, since the sequence $(Z_i-\E^{\cG_{i-1}}Z_i)_{i\geq 1}$ is one of martingale differences.

Now assume that $(A_{s,t})_{0\leq s<t\leq1}$ is a family of random variables such that $A_{s,t}$ is $\F_t$-measurable. If one is interested in the convergence of the Riemann sums $R^n=\sum_{i=1}^{2^{n}}A_{(i-1)2^{-n},i2^{-n}}$ in $L_p(\Omega)$, then the classical sewing ideas suggest to consider
\begin{equ}
R^{n}-R^{n+1}=\sum_{i=1}^{2^{n}} \delta A_{(i-1) 2^{-n},(i-1/2)2^{-n},i2^{-n}},
\end{equ}
where $\delta A_{s,u,t}=A_{s,t}-A_{s,u}-A_{u,t}$.
This sum fits into the context of \eqref{eq:SSL trick}, with the filtration $\G_i=\F_{i2^{-n}}$. This leads to the requirement in the stochastic sewing lemma on bounding $\E^s\delta A_{s,u,t}$, see \cite[Eq~(2.6)]{Khoa}.

It can be very useful to shift the conditioning backwards in time.
In many applications $A_{s,t}$ is actually $\cF_s$-measurable, which by the above considerations leads to bound $\E^{s-(t-s)/2}\delta A_{s,u,t}$.
Even if this is not the case, one can achieve such a shift by bounding the ``even'' and ``odd'' part of the sum in \eqref{eq:SSL trick} separately, noting that both $(Z_{2i}-\E^{\cG_{2i-2}}Z_{2i})_{i\geq 1}$ and $(Z_{2i+1}-\E^{\cG_{2i-1}}Z_{2i+1})_{i\geq 1}$ are sequences of martingale differences.
To make the formulation cleaner, we assume an extended filtration $(\G_i)_{i\geq-1}$, with which one has the following variant of \eqref{eq:SSL trick}:
\begin{equ}\label{eq:SSL trick2}
\big\|\sum_{i=1}^n Z_i\big\|_{L_p(\Omega)}
\lesssim\sum_{i=1}^n \|\E^{\G_{i-2}}Z_i\|_{L_p(\Omega)}
+\Big(\sum_{i=1}^n \|Z_i-\E^{\G_{i-2}}Z_i\|_{L_p(\Omega)}^2\Big)^{1/2}.
\end{equ}
Since separating the sum of $Z_i$-s into $2$ was of course arbitrary, one can get the same bound with $\G_{i-\ell}$ in place of $\G_{i-2}$ above for any $\ell$, as long as the filtration $\mathbb{G}$ extends to negative indices up to $-\ell+1$.
Another technical modification that we will need is to restrict the triples $(s,u,t)$ on which $\delta A_{s,u,t}$ is considered, to triples where the distances between each pair is comparable. To this end define, for $0\leq S<T\leq1$ and $M\geq0$,
\begin{equs}
\,[S,T]_M^2&=\{(s,t):\,S\leq s<t\leq T,\,s-M(t-s)\geq S\}\,;\\
\,[S,T]^3_M&=\{(s,u,t):\,(s,t)\in[S,T]_M^2,\, u\in(S,T)\,\}\,;\\
\overline{[S,T]}^3_M&=\{(s,u,t)\in [S,T]^3_M:\,(u-s),(t-u)\geq (t-s)/3\}\,.
\end{equs}
The version of the stochastic sewing lemma suitable for our purposes then reads as follows.
\begin{lemma}\label{lem:shifted SSL}
Let $0\leq S<T\leq1$, $p\in[2,\infty)$, $M\geq 0$, and let $(A_{s,t})_{(s,t)\in[S,T]_M^2}$ be a family of random variables in $L_p(\Omega,\R^d)$ such that $A_{s,t}$ is $\F_t$-measurable.
Suppose that for some $\eps_1,\eps_2>0$ and $C_1,C_2$ the bounds
\begin{equs}
\|A_{s,t}\|_{L_p(\Omega)} & \leq C_1|t-s|^{1/2+\eps_1}\,,\label{SSL1}
\\
\|\E^{s-M(t-s)}\delta A_{s,u,t}\|_{L_p(\Omega)} & \leq C_2 |t-s|^{1+\eps_2}\label{SSL2}
\end{equs}
hold for all $(s,t)\in[S,T]_M^2$ and $(s,u,t)\in\overline{[S,T]}_M^3.$
Then there exists a unique (up to modification) adapted 
process $\A:[S,T]\to L_p(\Omega,\R^d)$ such that $\A_S=0$ and such that for some constants $K_1,K_2<\infty$, depending only on $\eps_1,\eps_2$, $p$, $d$, and $M$, the bounds
\begin{align}
\|\A_t	-\A_s-A_{s,t}\|_{L_p(\Omega)} & \leq K_1C_1 |t-s|^{1/2+\eps_1}+K_2C_2 |t-s|^{1+\eps_2}\,,\label{SSL1 cA}
\\
\|\E^{s-M(t-s)}\big(\A_t	-\A_s-A_{s,t}\big)\|_{L_p(\Omega)} & \leq K_2 C_2|t-s|^{1+\eps_2}\label{SSL2 cA}
\end{align}
hold for $(s,t)\in [S,T]_M^2$ and the bound
\begin{equation}\label{SSL3 cA}
\|\A_t-\A_s\|_{L_p(\Omega)}  \leq  K_1C_1 |t-s|^{1/2+\eps_1}+K_2C_2 |t-s|^{1+\eps_2}.
\end{equation}
holds for all $(s,t)\in[S,T]_0^2$.
Moreover, if there exists any continuous process $\tilde\cA:[S,T]\to L_p(\Omega,\R^d)$, $\eps_3>0$, and $K_3<\infty$, such that $\tilde\cA_S=0$ and
\begin{equ}\label{eq:SSL-crit}
\|\tilde\cA_{t}-\tilde\cA_s-A_{s,t}\|_{L_p(\Omega)}\leq K_3|t-s|^{1+\eps_3}
\end{equ}
holds for all $(s,t)\in[S,T]_M^2$, then $\tilde\cA_t=\cA_t$ for all $S\leq t\leq T$.
\end{lemma}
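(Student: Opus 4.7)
The plan is to follow the dyadic telescoping strategy underlying the original stochastic sewing lemma of \cite{Khoa}, adapted to exploit the backward shift by $M(t-s)$ in \eqref{SSL2} through the variant \eqref{eq:SSL trick2}, and to respect the comparable-distance restriction $\overline{[S,T]}_M^3$ by working exclusively with midpoint refinements. Fix $(s,t)\in[S,T]_M^2$, set $s_i^n=s+i2^{-n}(t-s)$, and form $R^n(s,t)=\sum_{i=1}^{2^n}A_{s_{i-1}^n,s_i^n}$. The refinement identity $R^n-R^{n+1}=\sum_i\delta A_{s_{i-1}^n,s_{i-1/2}^n,s_i^n}$ involves only triples in $\overline{[S,T]}_M^3$: midpoint splitting makes both sub-distances equal to half the parent distance, and the defining inequality $s-M(t-s)\geq S$ propagates to every sub-pair $(s_{i-1}^n,s_i^n)$ since $s_{i-1}^n\geq s$ and $s_i^n-s_{i-1}^n\leq t-s$.

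First I would show that $\{R^n(s,t)\}_n$ is Cauchy in $L_p(\Omega)$ by applying \eqref{eq:SSL trick2} with shift $\ell=\lceil M\rceil+1$ and filtration $\G_i=\F_{s_i^n}$. Because $s_{i-\ell}^n\leq s_{i-1}^n-M(s_i^n-s_{i-1}^n)$, the tower property combined with \eqref{SSL2} yields $\|\E^{\G_{i-\ell}}\delta A_{s_{i-1}^n,s_{i-1/2}^n,s_i^n}\|_{L_p(\Omega)}\leq C_2(2^{-n}(t-s))^{1+\eps_2}$. Splitting the index set into the $\ell$ residue classes modulo $\ell$ produces $\ell$ genuine martingale-difference sequences with respect to the thinned filtrations $(\G_{\ell k+r})_k$, to which BDG and Minkowski apply separately, while the $\ell$ boundary indices $i<\ell$ (where the shifted filtration would exit the partition) contribute only lower-order terms bounded directly by \eqref{SSL1}. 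This yields
\[
\|R^n-R^{n+1}\|_{L_p(\Omega)}\lesssim C_1(t-s)^{1/2+\eps_1}2^{-n\eps_1}+C_2(t-s)^{1+\eps_2}2^{-n\eps_2},
\]
and summing in $n$ both defines $\cA_t-\cA_s:=\lim_n R^n(s,t)$ and establishes \eqref{SSL1 cA}. The conditional refinement \eqref{SSL2 cA} is then obtained by applying $\E^{s-M(t-s)}$ directly to $R^n-R^{n+1}$, using only the triangle inequality and CJI (no BDG needed), which produces the single term $O(C_2(t-s)^{1+\eps_2}2^{-n\eps_2})$, persisting in the limit.

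Next I would patch the pieces into a single process. Adaptedness of $\cA_t$ to $\F_t$ is inherited from the Riemann sums, while additivity $\cA_t-\cA_s=(\cA_t-\cA_u)+(\cA_u-\cA_s)$ is verified by aligning dyadic partitions through the common point $u$, giving a consistent adapted process on $[S,T]$ with $\cA_S=0$. The Hölder bound \eqref{SSL3 cA} for pairs in $[S,T]_M^2$ follows from \eqref{SSL1 cA} and \eqref{SSL1} by the triangle inequality; for near-boundary pairs in $[S,T]_0^2\setminus[S,T]_M^2$, one splits at an intermediate $u$ with $u-M(t-u)\geq S$ and takes a left-endpoint limit, using $L_p$-continuity of $\cA$ inherited from the construction. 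For uniqueness, given $\tilde\cA$ satisfying \eqref{eq:SSL-crit}, telescoping over a dyadic partition of $(s,t)\in[S,T]_M^2$ gives
\[
\|\tilde\cA_t-\tilde\cA_s-R^n(s,t)\|_{L_p(\Omega)}\leq K_3(t-s)^{1+\eps_3}2^{-n\eps_3}\xrightarrow[n\to\infty]{}0,
\]
so $\tilde\cA_t-\tilde\cA_s=\cA_t-\cA_s$; continuity of both processes together with $\tilde\cA_S=\cA_S=0$ then extends the identity to all $s\in[S,T]$.

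The main obstacle will be the combinatorial bookkeeping around the shift $\ell$: one has to verify carefully that within each residue class modulo $\ell$ the sequence $Z_{\ell k+r}-\E^{\G_{\ell k+r-\ell}}Z_{\ell k+r}$ genuinely forms a martingale-difference sequence with respect to $(\G_{\ell k+r})_k$, and that the $\ell$ boundary indices where the shifted filtration would leave the admissible range contribute only a lower-order term controllable by \eqref{SSL1}. Once these points are cleanly arranged, the remainder of the proof is a faithful transcription of \cite{Khoa}, complicated only by the comparable-distance restriction $\overline{[S,T]}_M^3$, which is automatically preserved by the midpoint refinement scheme.
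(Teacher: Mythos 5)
Your core estimate is sound and matches the paper's mechanism: midpoint splitting produces only triples in $\overline{[S,T]}_M^3$, the backward shift is handled by conditioning $\ell=\lceil M\rceil+1$ steps back (tower property plus CJI reduces to \eqref{SSL2}), and the residue classes modulo $\ell$ give genuine martingale-difference sequences to which BDG applies. That part, which you flag as the main obstacle, really is just bookkeeping. The genuine gap is the additivity step, which you dispatch in one clause (``aligning dyadic partitions through the common point $u$''). Your construction defines $\hat\cA_{s,t}=\lim_n R^n(s,t)$ only along the midpoint-dyadic chain of $[s,t]$. To get $\hat\cA_{s,t}=\hat\cA_{s,u}+\hat\cA_{u,t}$ you must show that the concatenation of the dyadic chains of $[s,u]$ and $[u,t]$ --- which in general is not a dyadic partition of $[s,t]$ --- yields the same limit as the dyadic chain of $[s,t]$. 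Neither sequence refines the other, so your refinement estimate never compares them; and the usual fix, passing to a common refinement, is blocked precisely by the comparability constraint: the common refinement of two comparable-mesh partitions can contain arbitrarily small intervals adjacent to ordinary ones, and telescoping through it generates triples outside $\overline{[S,T]}_M^3$, where \eqref{SSL2} is simply not assumed. (Midpoint additivity $\hat\cA_{s,t}=\hat\cA_{s,m}+\hat\cA_{m,t}$ does come for free from the nesting of dyadic chains, but it does not bootstrap to general $u$.) This is exactly why the paper abandons dyadic partitions: it introduces the class $\Pi$ of regular partitions ($\Delta\pi\geq|\pi|/2$), proves the comparison bounds \eqref{eq:claim i 1}--\eqref{eq:claim i 2} for an arbitrary pair consisting of a regular partition and a regular refinement of it via the coarsening map $\rho$ (merging adjacent intervals while preserving regularity, with special care for odd interval counts), and only then obtains partition-independence of the limit, hence additivity. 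That machinery has no counterpart in your proposal.

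Additivity is not a side issue here, because for $M>0$ no pair $(S,t)$ lies in $[S,T]_M^2$, so $\cA_t$ cannot be defined as $\hat\cA_{S,t}$ for \emph{any} $t$. The paper anchors the process at $S$ by setting $\cA_t=\sum_{i\geq1}\hat\cA_{t_i\wedge t,\,t_{i-1}\wedge t}$ along the geometric sequence $t_i=S+(T-S)\big(\tfrac{M}{M+1}\big)^i$, whose consecutive pairs are (exactly critically) admissible; it is additivity that makes the increments of this telescoped process coincide with $\hat\cA_{s,t}$ on $[S,T]_M^2$, so that \eqref{SSL1 cA}--\eqref{SSL2 cA} transfer to $\cA$. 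Your alternative --- ``split at an intermediate $u$ and take a left-endpoint limit, using $L_p$-continuity of $\cA$'' --- presupposes that $\cA$ is already defined near $S$, which is circular. Your uniqueness argument via \eqref{eq:SSL-crit} is correct as stated. In summary: right strategy and correct core Cauchy estimate, but the partition-comparison machinery forced by the restriction to $\overline{[S,T]}_M^3$ (the paper's regular partitions and the map $\rho$) is missing, and with it both the additivity of the limit and the construction of $\cA$ on all of $[S,T]$.
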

\begin{proof}
The proof essentially follows \cite{Khoa}, keeping in mind the preceding remarks, and making sure that one only ever uses suitably regular partitions.
For the present proof we understand the proportionality constant in $\lesssim$ to depend on $\eps_1,\eps_2,p,d,M$.

Take $(s,t)\in[S,T]_M^2$ and notice that $[s,t]_0^2\subset [S,T]^2_M$.
For a partition $\pi=\{s=t_0<t_1<\cdots<t_n=t\}$ denote $[\pi]=\frac{t-s}{n}$, $|\pi|=\max\{t_i-t_{i-1}\}$, and $\Delta\pi=\min\{t_i-t_{i-1}\}$. Introduce the ``regular'' partitions $\Pi=\Pi_{[s,t]}=\{\pi:\Delta\pi\geq|\pi|/2\}$.
Let us then define
\begin{equ}
A^{\pi}_{s,t}=\sum A_{t_{i-1},t_i}.
\end{equ}
We claim the following.
\begin{enumerate}[(i)]
\item If $\pi\in\Pi$ is a refinement of $\pi'\in\Pi$, then one has the bounds
\begin{align}
\|A^{\pi}_{s,t}-A^{\pi'}_{s,t}\|_{L_p(\Omega)} & \lesssim C_1 |t-s|^{1/2}[\pi']^{\eps_1}+C_2 |t-s|[\pi']^{\eps_2}\,,\label{eq:claim i 1}
\\
\|\E^{s-M(t-s)}\big(A^{\pi}_{s,t}-A^{\pi'}_{s,t}\big)\|_{L_p(\Omega)} & \lesssim C_2|t-s|[\pi']^{\eps_2};\label{eq:claim i 2}
\end{align}
\item The limit $\hat\cA_{s,t}:=\lim_{\pi\in\Pi,|\pi|\to 0}A^{\pi}_{s,t}$ in $L_p(\Omega)$ exists;
\item The family $(\hat\cA_{s,t})_{(s,t)\in[S,T]_M^2}$ is additive, i.e. $\delta\hat \cA_{s,u,t}=0$;
\item The bounds \eqref{SSL1 cA}-\eqref{SSL2 cA} hold with $\hat\cA_{s,t}$ in place of $\cA_t-\cA_s$.
\end{enumerate}

Since for any two $\pi,\pi'\in\Pi$ one can easily find a common refinement $\pi''\in\Pi$, (ii) follows from (i).
Note that for $s\leq u\leq t$ one can find a sequence $(\pi_n)_n\in\Pi_{[s,t]}$ such that $|\pi_n|\to0$, $\pi_n=\pi'_n\cup\pi''_n$, $(\pi'_n)_{n\in\N}\subset\Pi_{[s,u]}$, and $(\pi''_n)_{n\in\N}\subset\Pi_{[u,t]}$. Therefore (iii) is immediate from (ii). Finally, (iv) follows from (i) by setting $\pi'=\{s,t\}$ and letting $|\pi|\to 0$. 

Therefore, it only remains to show (i). We define a map $\rho$ on $\Pi$ as follows. If $ \pi=\{t_0,t_1,\ldots ,t_{n}\}$
and $n$ is odd, first set $n'=n+1$ and $t'_i=(t_i+t_{i-1})/2$, where $i$ is the first index such that $|\pi|=t_i-t_{i-1}$. Then set $t'_j=t_j$ for $0\leq j<i$ and $t'_j=t_{j-1}$ for $i<j\leq n'$.
If $n$ is even, simply set $n'=n$ and $t'_j=t_j$ for all $0\leq j\leq n$.
Then set $\rho(\pi)=\{t'_0,t'_2,t'_4,\ldots t'_{n'/2}\}$.
It is clear that $\rho(\pi)\in\Pi$ and that $[\rho(\pi)]\geq(3/2)[\pi]$ (unless $\pi$ is the trivial partition). One has
\begin{equ}\label{eq:proof mess}
A^{\pi}_{s,t}-A^{\rho(\pi)}_{s,t}=\delta A_{t_{i-1},t'_i,t_i}- \sum_{j=0}^{(n'-2)/2}\delta A_{t'_{2j},t'_{2j+1},t'_{2j+2}}.
\end{equ}
Note that each triple appearing belongs to $\overline{[S,T]}_M^3$.
The first term is simply bounded via \eqref{SSL1}. For the sum we use \eqref{eq:SSL trick2} and the succeeding remark. Note that $\delta A_{t'_{2j},t'_{2j+1},t'_{2j+2}}$ is $\F_{t'_{2j+2}}$-measurable, and that for large enough $\ell=\ell(M)$ (uniform in $\pi\in\Pi$, $n$, and $j$) one has $t'_{2(j-\ell)+2}\leq t'_{2j}-M(t'_{2j+2}-t'_{2j})$. Indeed, this is a consequence of the regularity of the partitions in $\Pi$. Therefore, the sum
\begin{equ}
\sum_{j=0}^{(n'-2)/2}\delta A_{t'_{2j},t'_{2j+1},t'_{2j+2}}-\E^{t'_{2j}-M(t'_{2j+2}-t'_{2j})}\delta A_{t'_{2j},t'_{2j+1},t'_{2j+2}}
\end{equ}
can be decomposed as the sum of $\ell$ different sums of martingale differences, each of which can be bounded as in \eqref{eq:SSL trick}.
From the bounds \eqref{SSL1 cA}-\eqref{SSL2 cA} one can conclude that \eqref{eq:claim i 1}-\eqref{eq:claim i 2} holds for $\pi'=\rho(\pi)$. Now suppose that $\pi'$ is the trivial partition $\{s,t\}$ and define $m$ as the smallest integer such that $\rho^m(\pi)=\pi'$. One can write
\begin{equs}
\|A^{\pi}_{s,t}-A^{\pi'}_{s,t}\|_{L_p(\Omega)}&\leq\sum_{i=0}^{m-1} \|A^{\rho^i(\pi)}_{s,t}-A^{\rho^{i+1}(\pi)}_{s,t}\|_{L_p(\Omega)}
\\
&\lesssim\sum_{i=0}^{m-1}C_1|t-s|^{1/2}[\rho^{i+1}(\pi)]^{\eps_1}
+C_2|t-s|[\rho^{i+1}(\pi)]^{\eps_2}.
\end{equs}
Recalling $[\rho^i(\pi)]\leq(2/3)^{m-i}[\rho^m(\pi)]=(2/3)^{m-i}[\pi']$,
this yields \eqref{eq:claim i 1} (and one gets similarly \eqref{eq:claim i 2}) for $\pi'=\{s,t\}$. Finally, if $\pi'=\{t_0,\ldots,t_n\}\in\Pi$ is arbitrary and $\pi\in\Pi$ is a refinement of it, define $\pi_i$ to be the restriction of $\pi$ to $[t_i,t_{i+1}]$. It is clear that $\pi_i\in\Pi_{[t_i,t_{i+1}]}$. Therefore writing
\begin{equ}
A_{s,t}^{\pi}-A_{s,t}^{\pi'}=\sum_{i=0}^{n-1}A^{\pi_i}_{t_i,t_{i+1}}-A_{t_i,t_{i+1}},
\end{equ}
each term in the sum is of the form that fits in the previous case, and so admits bounds of the form \eqref{eq:claim i 1}-\eqref{eq:claim i 2}. Using these bounds, the sum is treated just like the one in \eqref{eq:proof mess}, using \eqref{eq:SSL trick2}. Hence, (i) is proved.

From (iv) it is then immediate that \eqref{SSL3 cA} is also satisfied with $\hat\cA_{s,t}$ in place of $\cA_t-\cA_s$, but only for $(s,t)\in[S,T]_M$.
Let us define $t_i=S+(S-T)\big(\tfrac{M}{M+1}\big)^i$ and set, for $t\in[S,T]$,
\begin{equ}
\cA_t=\sum_{i=1}^\infty\hat\cA_{t_i\wedge t,\,t_{i-1}\wedge t}.
\end{equ}
First notice that for all $i\geq 1$ and $t\in[t_i,t_{i-1}]$, one has $(t_i,t)\in[S,T]_M$, so each term in the sum is well-defined.
The convergence of the sum in $L_p(\Omega)$ immediately follows from the bounds on $\hat\cA_{s,t}$ and the geometric decay of $|t_i-t_{i-1}|$.
It is also clear that $\cA$ is adapted and that $\cA_t\to 0$ in $L_p(\Omega)$ as $t\to S$.
Since $\cA_t-\cA_s=\hat \cA_{s,t}$ for pairs $(s,t)\in[S,T]_M$, the bounds \eqref{SSL1 cA}-\eqref{SSL2 cA}-\eqref{SSL3 cA} hold for such pairs. Extending
\eqref{SSL3 cA} to all $(s,t)\in[S,T]_0$ is standard, see e.g. \cite[Lem~2.3]{Carlo} for a very similar statement. The characterisation \eqref{eq:SSL-crit} is also standard, see e.g. \cite[Lem~4.2]{FH}.
\end{proof}

\begin{remark}
The usefulness of these small shifts can be illustrated with the following analogy.
To have $\int_0^{2^{-n}}x^\gamma\,dx\approx 2^{-(\gamma+1)n}$, one requires $\gamma>-1$. However, upon shifting the integral, $\int_{2^{-n}}^{2^{-n+1}}x^\gamma\,dx\approx 2^{-(\gamma+1)n}$ is true for \emph{all} $\gamma\in\R$. This example is actually more than an analogy: in \eqref{eq:I1} below we encounter such integrals and the above formulation allows one to bypass the condition $H(\alpha-2)>-1$.
\end{remark}

\subsection{Higher order expansion of solutions}
As the final ingredient, let us discuss how well a solution $\varphi$ (and more generally, an element of $\cS^k_K$) can be approximated at time $t$ by an $\F_s$-measurable random variable, for $s<t$.
From the differentiability of $\varphi$, one immediately sees that taking $\varphi_s$ gives an approximation of order $|t-s|$, or, slightly more cleverly, taking $\varphi_s+(t-s)\varphi'_s$ gives an approximation of order $|t-s|^{1+\alpha}$.
Since $\varphi$ is certainly \emph{not} twice differentiable, it might be surprising that one can push this expansion \emph{much} further and obtain an approximation of order almost $|t-s|^{1+\alpha H}$.

\begin{lemma}\label{lem:regularity}
Assume the setting and notations of Theorem \ref{thm:main} and Section \ref{sec:notations}, and in addition assume $\alpha<1$.
Then there exists a $k_0=k_0(H,\alpha,\eps)\in\N$ such that for all $\varphi\in \cS^{k_0}_K$ and
for all $0\leq s\leq 1$ one has almost surely for all $t\in[s,\tau_K]$
\begin{equs}
|\varphi_t-\E^s\varphi_t|&\lesssim |t-s|^{1+\alpha (H-\eps)}.\label{eq:cool bound}
\end{equs}
\end{lemma}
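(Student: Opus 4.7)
My plan is to induct on $k$ to obtain the $L_p$ version of the estimate, and then upgrade to almost-sure via Kolmogorov continuity. Specifically, I would prove
\[
\|\varphi_{t\wedge\tau_K}-\E^s\varphi_{t\wedge\tau_K}\|_{L_p(\Omega)}\lesssim|t-s|^{\beta_k},\qquad 0\leq s\leq t\leq 1,
\]
for all $\varphi\in\cS^k_K$ and $p\in[2,\infty)$ (with implicit constant depending on $p$), where $\beta_k$ is given by a bootstrap recurrence. The base case $k=1$ holds with $\beta_1=1$: any $\psi\in\cS^1_K=\cT_K(\cS^0_K)$ is Lipschitz with constant $\|b\|_\infty$ directly from \eqref{eq:main-phi-K}.

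For the inductive step, fix $\varphi\in\cS^k_K$, set $\psi=\cT_K(\varphi)$ and $\hat\varphi_r=\varphi_{r\wedge\tau_K}$. The only nontrivial case is $\tau_K\geq s$ (an $\F_s$-measurable event), on which, for $s\leq t$,
\[
\psi_{t\wedge\tau_K}-\E^s\psi_{t\wedge\tau_K}=\int_s^{t\wedge\tau_K}\bigl(b(\hat\varphi_r+B^H_r)-\E^s b(\hat\varphi_r+B^H_r)\bigr)\,dr.
\]
I split the integrand as $[b(\hat\varphi_r+B^H_r)-b(\E^s\hat\varphi_r+B^H_r)-\E^s(\cdot)]+[b(\E^s\hat\varphi_r+B^H_r)-\E^s b(\E^s\hat\varphi_r+B^H_r)]$. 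The first bracket is controlled in $L_p$ by \eqref{eq:conditional} and H\"older continuity of $b$, giving $\|\hat\varphi_r-\E^s\hat\varphi_r\|_{L_{p\alpha}}^\alpha\lesssim|r-s|^{\alpha\beta_k}$ from the inductive hypothesis (passed to $L_{p\alpha}$ via Lyapunov). For the second, already centered bracket I would apply \eqref{eq:conditional} with the $\F_s$-measurable approximant $b(\E^s\hat\varphi_r+\E^s B^H_r)$, combined with H\"older continuity of $b$ and $\|B^H_r-\E^s B^H_r\|_{L_{p\alpha}}\lesssim|r-s|^H$ (which follows from \eqref{eq:BH var} and the Gaussianity of $B^H_r-\E^s B^H_r$), giving $|r-s|^{H\alpha}$. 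Minkowski's inequality then yields
\[
\|\psi_{t\wedge\tau_K}-\E^s\psi_{t\wedge\tau_K}\|_{L_p}\lesssim|t-s|^{1+\alpha\beta_k}+|t-s|^{1+H\alpha},
\]
so $\beta_{k+1}=1+\alpha\min(\beta_k,H)$.

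Since \eqref{eq:true exponent} is equivalent to $1/(1-\alpha)>H$, iterating the linear recurrence from $\beta_1=1$ exceeds $H$ in finitely many steps, after which $\beta_k$ saturates at $1+H\alpha$. Choosing $k_0$ large enough gives the $L_p$ bound with exponent $1+H\alpha$ for every $p$, and a Kolmogorov-type continuity argument (taking $p$ large and absorbing a loss of order $\alpha\eps$ in the exponent) converts this into the claimed almost-sure bound on $[s,\tau_K]$.

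The main obstacle is this $L_p$-to-almost-sure upgrade: because the target exponent $1+\alpha(H-\eps)$ is strictly greater than $1$, this is not a standard H\"older continuity estimate in $t-s$ but a two-parameter increment bound. I would handle it by a chaining / Borel--Cantelli argument on a dyadic grid of $(s,t)$ pairs (intersecting null sets over rational $s$ and extending by continuity), or equivalently by exploiting that Picard iterates are a.s.\ $C^1$ in $t$ (with derivative $b(\hat\varphi^{\text{prev}}_\cdot+B^H_\cdot)$) and applying \eqref{eq:stopping} to the resulting Taylor remainder of fractional order.
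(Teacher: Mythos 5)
Your $L_p$ induction has the same combinatorial skeleton as the paper's proof---the recursion $\beta_{k+1}=1+\alpha\min(\beta_k,H)$, its saturation after finitely many steps, and the use of \eqref{eq:conditional} and \eqref{eq:BH var} all mirror the paper's recursion $\gamma_k=1+\alpha\big(\gamma_{k-1}\wedge(H-\eps)\big)$ (incidentally, \eqref{eq:true exponent} implies, but is not equivalent to, $1/(1-\alpha)>H$; the latter is the weaker condition \eqref{eq:weak exponent})---but two steps are genuinely broken. First, the stopping time: the identity $\psi_{t\wedge\tau_K}-\E^s\psi_{t\wedge\tau_K}=\int_s^{t\wedge\tau_K}\big(b(\hat\varphi_r+B^H_r)-\E^s b(\hat\varphi_r+B^H_r)\big)\,dr$ is false, because $\E^s$ does not commute with the random upper limit $t\wedge\tau_K$, which is not $\F_s$-measurable. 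Writing the stopped integral as $\int_s^t\bone_{r\le\tau_K}(\cdots)\,dr$, the resulting discrepancy is only controlled by $\|b\|_{C^0}|t-s|\,\mathbb{P}(s\le\tau_K<t)^{1/p}$; nothing is known (nor true in general) about arbitrarily-high-order polynomial decay of the law of $\tau_K$, so this is not $O(|t-s|^{1+\theta})$ for any $\theta>0$, and it degenerates further as $p\to\infty$, which is exactly the regime your last step requires. Hence the recursion does not close as claimed. Second, the $L_p$-to-almost-sure upgrade, which you rightly flag as the main obstacle, cannot be delegated to Kolmogorov's theorem: the field $X_{s,t}=\varphi_{t\wedge\tau_K}-\E^s\varphi_{t\wedge\tau_K}$ is not the increment of a single path and is not additive (one has $X_{s,t}-X_{s,u}-X_{u,t}=(\E^u-\E^s)(\varphi_t-\varphi_u)$), the target exponent exceeds $1$, and \eqref{eq:stopping} is stated only for one-parameter seminorms with $\gamma\in(0,1)$. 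Chaining in $t$ with the only increment bound available for free---the Lipschitz one---caps the exponent at $1$; a working chaining argument would need something like $\|X_{s,t}-X_{s,t'}\|_{L_p}\lesssim|t-t'|\,|t-s|^{\alpha H}$, whose proof already forces you to build an explicit $\F_s$-measurable approximation of $\int_{t'}^{t}b(\hat\varphi_r+B^H_r)\,dr$; and even then Borel--Cantelli yields a random constant, whereas \eqref{eq:cool bound} asserts (and its application in \eqref{eq:I1}, where the bound is pulled out of an $L_p$ norm, uses) a deterministic one.

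The paper avoids both problems by never touching conditional expectations until the very last line. It constructs the $\F_s$-measurable approximant explicitly and pathwise: freeze the noise at its Taylor polynomial $\bB^H_{s,t}=\sum_{i=0}^{\floorH}\frac{(t-s)^i}{i!}\partial^iB^H_s$ and iterate the Picard map with frozen data, $\bA^{(k)}_{s,t}\varphi=\varphi_s+\int_s^tb\big(\bA^{(k-1)}_{s,r}\psi+\bB^H_{s,r}\big)\,dr$. Because $\tau_K$ is defined precisely so that $\|B^H\|_{C^{H-\eps}[0,\tau_K]}\le K$, the induction yields the almost sure, deterministic-constant bound \eqref{eq:Phi approx}, i.e.\ $|\varphi_t-\bA^{(k)}_{s,t}\varphi|\lesssim|t-s|^{\gamma_k}$ for $s\le t\le\tau_K$---here the stopping time is what makes the estimate pathwise, rather than an obstruction---and then \eqref{eq:conditional} with $p=\infty$ and $Y=\bA^{(k_0)}_{s\wedge\tau_K,t\wedge\tau_K}\varphi$ gives \eqref{eq:cool bound} in one line: no continuity theorem, no Borel--Cantelli, and the $\eps$-loss enters only through the pathwise regularity of $B^H$ rather than through any upgrade. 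The idea your proposal is missing is exactly this replacement of $\E^s\varphi_t$ by a concrete, pathwise-constructed $\F_s$-measurable surrogate; with $\E^s$ itself as the approximant, the stopping time and the almost-sure upgrade create difficulties that your outline does not overcome.
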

\begin{proof}
Let
\begin{equ}
\bB_{s,t}^H=\sum_{i=0}^{\floorH}\frac{(t-s)^i}{i!}\d^i B^H_s.
\end{equ}
Clearly, for $0\leq s\leq t\leq\tau_K$, one has $|B^H_t-\bB^H_{s,t}|\lesssim |t-s|^{H-\eps}$.
Define maps $\bA^{(k)}$ on $\cS^k_K$ as follows. For $k=0$ and $\varphi\in \cS^0_K$ set $\bA^{(0)}_{s,t}\varphi=\varphi_s$. For $k>0$ and $\varphi\in \cS^k_K$, take $\psi\in \cS^{k-1}_K$ such that $\varphi=\cT_K(\psi)$ and define inductively
\begin{equ}
\bA_{s,t}^{(k)}\varphi=\varphi_s+\int_s^t b\big(\bA_{s,r}^{(k-1)}\psi+\bB^H_{s,r}\big)\,dr.
\end{equ}
We aim to get almost sure bounds of the form
\begin{equ}\label{eq:Phi approx}
|\varphi_t-\bA_{s,t}^{(k)}\varphi|\lesssim |t-s|^{\gamma_k},\qquad 0\leq s\leq t\leq\tau_K
\end{equ}
with some $\gamma_k$ to be determined.
We proceed by induction.
Clearly \eqref{eq:Phi approx} holds for $k=0$ with $\gamma_0=1$. In the inductive step, from \eqref{eq:Phi approx} one deduces 
\begin{equs}
|\varphi_t-\bA_{s,t}^{(k)}\varphi|&=\big|\int_s^t b(\psi_r+B^H_r)-b\big(\bA_{s,r}^{(k-1)}\psi+\bB^H_{s,r}\big)\,dr\big|
\\
&\lesssim \int_s^t |\psi_r-\bA_{s,r}^{(k-1)}\psi|^\alpha+|B^H_r-\bB^H_{s,r}|^\alpha\,dr
\\
&\lesssim |t-s|^{1+\alpha\big(\gamma_{k-1}\wedge(H-\eps)\big)},
\end{equs}
where in the last inequality we used the induction hypothesis.
Therefore, \eqref{eq:Phi approx} holds for all $k\in\N$, with $\gamma_k$ defined by the recursion
\begin{equ}
\gamma_0=1,\qquad
\gamma_{k}=1+\alpha\big(\gamma_{k-1}\wedge(H-\eps)\big).
\end{equ}
For $k< k_0:=\inf\{\ell\in\N:\,\gamma_\ell>H-\eps\}+1$, one has $\gamma_{k-1}\leq H-\eps$ and therefore
\begin{equ}
\gamma_{k}=1+\alpha+\alpha^2+\cdots+\alpha^{k}=\frac{1-\alpha^{k+1}}{1-\alpha}\underset{k\to\infty}{\longrightarrow}\frac{1}{1-\alpha}>H.
\end{equ}
Hence, $k_0$ is finite. One then has
\begin{equ}
\gamma_{k_0}=1+\alpha\big(\gamma_{k_0-1}\wedge(H-\eps)\big)
=1+\alpha(H-\eps).
\end{equ}
Rewriting \eqref{eq:Phi approx} in an equivalent form,
one has almost surely for all $0\leq s\leq t\leq 1$
\begin{equ}
|\varphi_{t\wedge\tau_K}-\bA_{s\wedge\tau_K,t\wedge\tau_K}^{(k_0)}\varphi|\lesssim |t-s|^{1+\alpha(H-\eps)}.
\end{equ}
For fixed $s$ and $t$, apply \eqref{eq:conditional} with $p=\infty$, $X=\varphi_{t\wedge\tau_K}$, and $Y=\bA_{s\wedge\tau_K,t\wedge\tau_K}^{(k_0)}\varphi$, since the latter is certainly measurable with respect to $\F_s$.
Therefore,
one has almost surely
\begin{equ}
|\varphi_{t\wedge\tau_K}-\E^s\varphi_{t\wedge\tau_K}|\lesssim |t-s|^{1+\alpha(H-\eps)}.
\end{equ}
From the continuity of $\varphi$, this bound holds
for all $0\leq s\leq 1$ almost surely for all $t\in[s,1]$,
which readily implies the claim.
\end{proof}

\begin{remark}
One can easily check that as far as \eqref{eq:cool bound} is concerned, condition \eqref{eq:true exponent} is overkill: indeed, the above argument works as long as $H>1$ and
\begin{equ}\label{eq:weak exponent}
\alpha>1-\frac{1}{H}\,.
\end{equ} 
Condition \eqref{eq:weak exponent} is also equivalent to the existence of $\eps>0$ such that $1+\alpha(H-\eps)>H$. In other words, \eqref{eq:weak exponent} is precisely the condition that guarantees that the drift components of sufficiently high order Picard iterates (and in particular, solutions, if any exist) are ``more regular'', in a stochastic sense, than their noise components. 
While this is only a heuristic indication of regularisation effects, it is interesting to note that several works in the literature connect the condition \eqref{eq:weak exponent} to \emph{weak} well-posedness.

As for necessity, it is shown in \cite{dR-example} that weak uniqueness fails for $\alpha<1-1/H$.
As for sufficiency, a number of results are available in Markovian settings.
In the standard Brownian (that is, $H=1/2$) case \cite{DD, CC} constructed martingale solutions to \eqref{eq:main} with (so-called ``enhanced'') distributional drift of regularity $\alpha>-2/3$,
but the power counting heuristics work all the way to $\alpha>-1=1-1/H$.
Similarly, \cite{HP20} studied martingale solutions for SDEs driven by $\lambda$-stable L\'evy noise in the range $\alpha>(2-2\lambda)/3$, with the power counting suggesting the threshold $1-\lambda$. This is also consistent with \eqref{eq:weak exponent}, using the scaling correspondence $\lambda\leftrightarrow 1/H$.
In the degenerate Brownian (that is, $H=k+1/2$, $k\in\N$) case weak well-posedness under the condition \eqref{eq:weak exponent} was proved in \cite{dR-weak}.

It is therefore natural to conjecture that \eqref{eq:weak exponent} guarantees weak well-posedness also in the non-Markovian case, but to our best knowledge this question is still well open.
\end{remark}

\section{Proof of Theorem \ref{thm:main}}\label{sec:proof}
Take $p\in[2,\infty)$, to be specified to be large enough later.  Recall that it suffices to prove the well-posedness of \eqref{eq:main} up to $\tau_K$.
To this end, we will show that $\cT_K$ is a contraction on $\cS^{k_0}_K$ in a suitable metric. For any process $f$, we define the stopped process $f^K$ by $f^K_t=f_{t\wedge\tau_K}$.

We want to apply Lemma \ref{lem:shifted SSL}, with $M=1$.
Let $\varphi,\psi\in \cS^{k_0}_K$.
Fix $0\leq S<T\leq 1$ and for $(s,t)\in[S,T]_{1}^2$
define
\begin{equ}
A_{s,t}=\E^{s-(t-s)}\int_{s}^{t} b(B^H_r+\E^{s-(t-s)}\varphi_{r}^K)-b(B^H_r+\E^{s-(t-s)}\psi_{r}^K)\,dr.
\end{equ}
For $(s,t)\in[S,T]_1^2$ denote $s_1=s-(t-s),s_2=s,s_3=t$. Then by \eqref{eq:HK} and CJI one has
\begin{equs}
\|A_{s,t}\|_{L_p(\Omega)}&\lesssim\big\|\int_{s_2}^{s_3}\cP_{r-s_1}^H
b(\E^{s_1} B^H_r+\E^{s_1}\varphi_{r}^K) -\cP_{r-s_1}^H
b(\E^{s_1} B^H_r+\E^{s_1}\psi_{r}^K)\,dr\big\|_{L_p(\Omega)}
\\
&\lesssim\int_{s_2}^{s_3} (r-s_1)^{H(\alpha-1)}
\|\E^{s_1}\varphi_{r}^K-\E^{s_1}\psi_{r}^K\|_{L_p(\Omega)}\,dr
\\
&\lesssim |t-s|^{1+H\alpha-H}\|\varphi^K-\psi^K\|_{\scC^0_p[s_2,s_3]}.
\end{equs}
From \eqref{eq:true exponent2}, the condition \eqref{SSL1} is satisfied with $C_1=\|\varphi^K-\psi^K\|_{\scC^0_p[0,T]}$.

Concerning the second condition of Lemma \ref{lem:shifted SSL}, for $(s,u,t)\in\overline{[S,T]}^3_1$ denote $s_1=s-(t-s)$, $s_2=s-(u-s)$, $s_3=u-(t-u)$, $s_4=s$, $s_5=u$, $s_6=t$.
These points are \emph{not} necessarily ordered according to their indices, but thanks to the definition of $\overline{[S,T]}^3_1$ they satisfy $(s_4-s_2),(s_5-s_3)\geq (t-s)/3$.
One can write
\begin{equs}
\E^{s-(t-s)}\delta A_{s,u,t}
&=
\E^{s_1}\E^{s_2}\int_{s_4}^{s_5}b(B^H_r+\E^{s_1}\varphi_{r}^K)
-b(B^H_r+\E^{s_1}\psi_{r}^K)
\\&\qquad\qquad\qquad-b(B^H_r+\E^{s_2}\varphi_{r}^K)+b(B^H_r+\E^{s_2}\psi_{r}^K)\,dr
\\
&\quad+\E^{s_1}\E^{s_3}\int_{s_5}^{s_6}b(B^H_r+\E^{s_1}\varphi_{r}^K)-b(B^H_r+\E^{s_1}\psi_{r}^K)
\\
&\qquad\qquad\qquad-b(B^H_r+\E^{s_3}\varphi_{r}^K)+b(B^H_r+\E^{s_3}\psi_{r}^K)\,dr
\\
&=:I_1+I_2.
\end{equs}
The two terms are treated in exactly the same way, so we only detail $I_1$.
By \eqref{eq:HK},
\begin{equs}[eq:I1]
\|I_1\|_{L_p(\Omega)}&\lesssim
\Big\|\int_{s_4}^{s_5}|r-s_2|^{H(\alpha-2)}|\E^{s_1}\varphi_r^K-\E^{s_1}\psi_r^K||\E^{s_1}\varphi_r^K-\E^{s_2}\varphi_r^K|
\\&\qquad\qquad+|r-s_2|^{H(\alpha-1)}|\E^{s_1}\varphi_r-\E^{s_1}\psi_r-\E^{s_2}\varphi_r+\E^{s_2}\psi_r|\,dr\Big\|_{L_p(\Omega)}.
\end{equs}
From CJI, \eqref{eq:cool bound}, and \eqref{eq:triv bound2}, we have the bounds
\begin{equs}
&\|\E^{s_1}\varphi_r^K-\E^{s_1}\psi_r^K\|_{L_p(\Omega)}\leq\|\varphi^K-\psi^K\|_{\scC^0_p[s_4,s_5]}\,;
\\
&|\E^{s_1}\varphi_r^K-\E^{s_2}\varphi_r^K|\leq |\E^{s_1}\varphi_r^K-\varphi_r^K|+|\E^{s_2}\varphi_r^K-\varphi_r^K|\lesssim |t-s|^{1+\alpha(H-\eps)}\,;
\\
&\|\E^{s_1}(\varphi^K-\psi^K)_r-\E^{s_2}(\varphi^K-\psi^K)_r\|_{L_p(\Omega)}
\lesssim|t-s|^{1/2}[\varphi^K-\psi^K]_{\scC^{1/2}_p[s_1,s_5]}\,.
\end{equs}
Substituting this into \eqref{eq:I1}, and recalling that $\int_{s_4}^{s_5}|r-s_2|^\gamma\,dr\lesssim |t-s|^{1+\gamma}$ for any $\gamma\in\R$, we get
\begin{equs}
\,&\|I_1\|_{L_p(\Omega)}
\\&\lesssim|t-s|^{1+H(\alpha-2)+1+\alpha (H-\eps)}\|\varphi^K-\psi^K\|_{\scC^0_p[s_4,s_5]}+|t-s|^{1+H(\alpha-1)+1/2}[\varphi^K-\psi^K]_{\scC^{1/2}_p[s_1,s_5]}
\\
&=|t-s|^{2(1+H\alpha-H)-\eps \alpha}\|\varphi^K-\psi^K\|_{\scC^0_p[s_4,s_5]}+|t-s|^{(1+H\alpha-H)+1/2}[\varphi^K-\psi^K]_{\scC^{1/2}_p[s_1,s_5]}.
\end{equs}
From \eqref{eq:true exponent2} and \eqref{eq:eps}, we see that both exponents of $|t-s|$ above are strictly bigger than $1$.
With the analogous bound on $I_2$, one concludes that \eqref{SSL2} holds with $C_2=\|\varphi^K-\psi^K\|_{\scC^0_p[S,T]}+[\varphi^K-\psi^K]_{\scC^{1/2}_p[S,T]}$,
and therefore Lemma \ref{lem:shifted SSL} applies. 
We claim that the process $\cA$ is given by
\begin{equ}
\tilde\cA_t=\big(\cT_K(\varphi)-\cT_K(\psi)\big)_t=\int_0^{t} b(B^H_r+\varphi_{r}^K)-b(B^H_r+\psi_{r}^K)\,dr.
\end{equ}
Indeed, first notice that that since both $B^H$, $\varphi^K$, and $\psi^K$ are all trivially Lipschitz-continuous with their Lipschitz constant having $p$-th moments, $\tilde \cA$ belongs to $\scC^{1+\alpha}_p$.
Therefore, by \eqref{eq:triv bound} we can write 
\begin{equ}
\|\tilde\cA_{t}-\tilde\cA_s-\E^{s-(t-s)}(\tilde\cA_{t}-\tilde\cA_s)\|_{L_p(\Omega)}\lesssim |t-s|^{1+\alpha}\|\tilde\cA\|_{\scC^{1+\alpha}_p}\lesssim|t-s|^{1+\alpha}.
\end{equ}
On the other hand, by CJI and \eqref{eq:triv bound} again, we have
\begin{equs}
\|\E^{s-(t-s)}(\tilde\cA_{t}-\tilde\cA_s)-A_{s,t}\|_{L_p(\Omega)}&\lesssim 
\int_{s}^{t}\|b(B^H_r+\E^{s-(t-s)}\varphi_r^K)-b(B^H_r+\varphi_r^K)\|_{L_p(\Omega)}
\\&\qquad+
\|b(B^H_r+\E^{s-(t-s)}\psi_r^K)-b(B^H_r+\psi_r^K)\|_{L_p(\Omega)}\,dr
\\
&\lesssim |t-s|^{1+\alpha}.
\end{equs}
Hence \eqref{eq:SSL-crit} is satisfied, and $\tilde\cA=\cA$.
The bound \eqref{SSL3 cA} then yields, with some $\eps'>0$, for all $S\leq s<t\leq T$,
\begin{equs}
\|\cA_{t}-\cA_s\|_{L_p(\Omega)}&=\|\big(\cT_K(\varphi)-\cT_K(\psi)\big)_t-\big(\cT_k(\varphi)-\cT_k(\psi)\big)_s\|_{L_p(\Omega)}
\\
&\lesssim|t-s|^{1/2+\eps'}\|\varphi^K-\psi^K\|_{\scC^0_p[S,T]}+
|t-s|^{1+\eps'}\|\varphi^K-\psi^K\|_{\scC^0_p[S,T]}
\\
&\qquad+|t-s|^{1+\eps'}[\varphi^K-\psi^K]_{\scC^{1/2}_p[S,T]}
\\
&\lesssim|t-s|^{1/2+\eps'}\|\varphi^K-\psi^K\|_{\scC^0_p[S,T]}
+|t-s|^{1+\eps'}[\varphi^K-\psi^K]_{\scC^{1/2}_p[S,T]}.\qquad\label{eq:000}
\end{equs}
Let us first choose $S=0$, in which case $\|\varphi^K-\psi^K\|_{\scC^0_p[0,T]}\lesssim[\varphi^K-\psi^K]_{\scC^{1/2}_p[0,T]}$. Therefore, upon dividing by $|t-s|^{1/2+\eps'/2}$ and taking supremum over $0\leq s<t\leq T$, one gets
\begin{equ}
\,[\cT_K(\varphi)-\cT_K(\psi)]_{\scC^{1/2+\eps'/2}_p[0,T]}\lesssim T^{\eps'/2}[\varphi^K-\psi^K]_{\scC^{1/2}_p[0,T]}.
\end{equ}
We can apply \eqref{eq:stopping} with $f=\cT_K(\varphi)-\cT_K(\psi)$, $\tau=\tau_K$, $\gamma=1/2$, $\eps=\eps'/2$. This is the only point where the choice of $p$ matters: we need to take $p>2d/\eps'$.
We get
\begin{equ}
\,\big[\big(\cT_K(\varphi)\big)^K-\big(\cT_K(\psi)\big)^K\big]_{\scC^{1/2}_p[0,T]}\lesssim T^{\eps'/2}[\varphi^K-\psi^K]_{\scC^{1/2}_p[0,T]},
\end{equ}
and therefore, for sufficiently small $T>0$,
\begin{equ}\label{eq:contraction}
\,\big[\big(\cT_K(\varphi)\big)^K-\big(\cT_K(\psi)\big)^K\big]_{\scC^{1/2}_p[0,T]}\leq \frac{1}{2}[\varphi^K-\psi^K]_{\scC^{1/2}_p[0,T]}.
\end{equ}
We conclude that $\cT_K$ is a contraction on $\cS^{k_0}_K$ with respect to $[(\cdot)^K]_{\scC^{1/2}_p[0,T]}$.

From here the argument is more or less routine, with some tedium due to the stopping times.
Introduce $\hat\cT_K$ by setting $\hat\cT_K(\lambda)=\big(\cT_K(\lambda)\big)^K$ and let $\hat\cS_K^n=\hat\cT_K^n(\cS_K^0)$.
It is clear from the definition \eqref{eq:main-phi-K} that for any $n\geq 1$, $\hat\lambda\in\hat\cS_K^n$ if and only if $\hat\lambda=\lambda^K$ for some $\lambda\in\cS_K^n$.
In particular, \eqref{eq:contraction} can be written as follows: for any $\hat\varphi,\hat\psi\in\hat\cS_K^{k_0}$,
\begin{equ}\label{eq:contraction 2}
\,\big[\hat\cT_K(\hat\varphi)-\hat\cT_K(\hat\psi)\big]_{\scC^{1/2}_p[0,T]}\leq \frac{1}{2}[\hat\varphi-\hat\psi]_{\scC^{1/2}_p[0,T]}.
\end{equ}
Notice that on the subset $\hat\scC$ of $\scC^{1/2}_p[0,T]$ consisting of processes that vanish at time $0$, the  seminorm $[\cdot]_{\scC^{1/2}_p[0,T]}$ is actually a norm.
Therefore for any fixed $\hat\varphi\in \hat\cS^{k_0}_K$, the sequence $(\hat\cT^n_K(\hat\varphi))_{n\in\N}$ converges in $\hat\scC$ to some $\hat\varphi^\ast$. Since $\hat\cT_K$ is continuous on $\hat\scC$, one has $\hat\varphi^\ast=\hat\cT_K(\hat\varphi^\ast)$ on $[0,T]$, yielding a local solution.

To obtain a solution beyond time $T$, let $\cS^{0,\ast}_K$ to be the set of adapted Lipschitz continuous processes agreeing with $\hat\varphi^\ast$ on $[0,T]$
and set
$\cS^{k,\ast}_K=\cT_K^{k}(\cS^{0}_K)$,
$\hat\cS^{k,\ast}_K=\hat\cT_K^{k}(\cS^{0}_K)$.
Note that elements of $\cS^{k,\ast}_K$ agree with $\hat\varphi^\ast$ on $[0,\tau_K\wedge T]$, while elements of $\hat\cS^{k,\ast}_K$ agree with $\hat\varphi^\ast$ on $[0,T]$.
Also, trivially, $\cS^{k,\ast}_K\subset\cS^{k}_K$
and therefore the bound \eqref{eq:000} holds for $\varphi,\psi\in\cS^{k_0,\ast}_K$.
Moreover, notice that on $\cS^{k,\ast}_K$, one has $\|\varphi^K-\psi^K\|_{\scC^0_p[T,2T]}\lesssim[\varphi^K-\psi^K]_{\scC^{1/2}_p[T,2T]}$. 
Therefore similarly to \eqref{eq:contraction}, we get
\begin{equ}
\,\big[\big(\cT_K(\varphi)\big)^K-\big(\cT_K(\psi)\big)^K\big]_{\scC^{1/2}_p[T,2T]}\leq \frac{1}{2}[\varphi^K-\psi^K]_{\scC^{1/2}_p[T,2T]}.
\end{equ}
Following the same argument as above, we obtain a process $\hat\varphi^{\ast\ast}$ such that $\hat\varphi^{\ast\ast}=\hat\cT_K(\hat\varphi^{\ast\ast})$ on $[T,2T]$ and that furthermore agrees with $\hat\varphi^\ast$ on $[0,T]$. Therefore, in fact, one has $\varphi^{\ast\ast}=\hat\cT_K(\hat\varphi^{\ast\ast})$ on $[0,2T]$. Repeating this argument finitely many times yields a fixed point of $\hat\varphi^{\star}$ on $[0,1]$.

The uniqueness is immediate from the above construction: any two fixed points of $\hat\cT_K$ necessarily lie in $\hat\cS^{k_0}_K$ and therefore they agree on $[0,T]$ by \eqref{eq:contraction 2}. One can then proceed iteratively as above.
Therefore, $X=\hat\varphi^{\star}+B^H$ is the unique solution of \eqref{eq:main} up to $\tau_K$, finishing the proof. 
\qed

\bigskip
\textbf{Acknowledgment.} The author thanks Oleg Butkovsky for discussions on the topic and for pointing out the references \cite{Brownian-1, Brownian-2}, and Lucio Galeati for pointing out a mistake in an earlier version of the paper.

\bibliographystyle{Martin}
\bibliography{smooth}

\end{document}